\documentclass[11pt, oneside]{article}   	
\usepackage{amsmath, amsthm, amsfonts, amssymb}
\usepackage{graphicx}
\usepackage[colorlinks,citecolor=blue,urlcolor=blue]{hyperref}
\setlength{\textheight}{230mm}
\setlength{\textwidth}{155mm}
\setlength{\topmargin}{-10mm}
\setlength{\oddsidemargin}{5mm}
\setlength{\evensidemargin}{5mm}

\newcommand\Ex{{\mathbb E}}

\newcommand\Prob{{\mathbb P}}

\newcommand\bfk{{\mathbf k}}


\newcommand\cC{{\mathcal C}}

\newcommand\cF{{\mathcal F}}

\newcommand\cK{{\mathcal K}}

\newcommand\cP{{\mathcal P}}

\newcommand\cX{{\mathcal X}}


\newcommand\R{{\mathbb R}}

\newcommand\K{{\mathbb K}}



\newcommand\vto{\overset{v}{\to }}

\newcommand\norm[1]{\|#1\|}

\newcommand\bra[1]{\langle #1 \rangle}

\DeclareMathOperator{\image}{im}

\DeclareMathOperator{\conv}{conv}

\newtheorem{theorem}{Theorem}[section]

\newtheorem{lemma}[theorem]{Lemma}

\theoremstyle{definition}
\newtheorem{definition}[theorem]{Definition}

\theoremstyle{remark}


%
\title{On persistent homology of random \v{C}ech complexes\footnote{This work is partially supported by JST CREST Mathematics (15656429) and by JSPS KAKENHI Grant Numbers JP16K17616}}
\author{Khanh Duy Trinh
\footnote{Research Alliance Center for Mathematical Sciences, Tohoku University, Japan. 
\newline
Email: \texttt{trinh.khanh.duy.a3@tohoku.ac.jp}}
}

\begin{document}
\maketitle

\begin{abstract}      
The paper studies the relation between critical simplices and persistence diagrams of the \v{C}ech filtration. We show that adding a critical $k$-simplex into the filtration corresponds either to a point in the $k$th persistence diagram or a point in the $(k-1)$st persistence diagram. Consequently, the number of points in persistence diagrams can be expressed in terms of the number of critical simplices. As an application, we establish some convergence results related to persistence diagrams of the \v{C}ech filtrations built over binomial point processes.

\medskip

	\noindent{\bf Keywords:} \v{C}ech complex; critical simplex; generalized discrete Morse theory; persistence diagram; binomial point process
		
\medskip
	
	\noindent{\bf AMS Subject Classification:} primary 60K35; secondary 55N20
\end{abstract}

\section{\v{C}ech complexes: critical simplices and persistent homology}
Let $\cX \subset \R^N$ be a finite set. For a radius parameter $t \ge 0$, the \v{C}ech complex $\cC_t(\cX)$ is defined by
\[
	\cC_t(\cX) = \left\{\emptyset \neq \sigma \subset \cX : \bigcap_{x \in \sigma} \bar B_t(x) \ne \emptyset \right\},
\]
where $\bar B_t(x) = \{y \in \R^N : \norm{y - x} \le t\}$ denotes the closed ball of radius $t$ centered at $x$ with respect to the Euclidean distance $\norm{\cdot}$. This is an abstract simplicial complex homotopy equivalent to the union of balls $\cup_{x \in \cX} \bar B_t(x)$ (by the nerve lemma). A subset $\sigma \subset \cX$ of cardinality $(k+1)$ is called a $k$-simplex, or simply a simplex. A $0/1/2$-simplex is usually referred to as a vertex, an edge or a triangle, respectively.

When the radius $t$ is small enough, the  \v{C}ech complex $\cC_t(\cX)$ consists of only vertices. As $t$ increases, simplices are added more and more until all simplices have already been included. Topology features such as rings and cavities are created and then disappear as the radius parameter runs from zero to infinity. To see how topology features change in the filtration $\{\cC_t(\cX)\}_{t \ge 0}$, we use a mathematical tool called persistent homology which can be visualized by persistence diagrams. Roughly speaking, a point $(b,d)$, called a birth-death pair, in the $q$th persistence diagram corresponds to a $q$-dimensional hole which appears at time $t = b$, persists during $t \in [b, d)$, and disappears at time $t = d$. 

Let 
\begin{equation}\label{f}
	f(\sigma) = \inf \{t \ge 0 : \sigma \in \cC_t(\cX)\}
\end{equation}
be the birth time of a simplex $\sigma$. It may happen that several simplices are added at the same time. However, when the points $\cX$ satisfy some mild conditions, the \v{C}ech filtration has a very nice structure in the sense that either of the following holds
\begin{itemize}
	\item[(0)]	$\cC_t = \cC_t^-$, where $\cC_t^-= \cup_{s < t} \cC_s$;
	\item[(i)]	$\cC_t = \cC_t^- \sqcup \{\alpha\}$;
	\item[(ii)]	$\cC_t = \cC_t^- \sqcup [\tau, \sigma]$, for $\tau \subsetneq \sigma$, where $[\tau, \sigma]= \{\alpha : \tau \subset \alpha \subset \sigma\}$ is called an interval.
\end{itemize}
Here `$\sqcup$' denotes the disjoint union.

In case (i), a $k$-simplex $\alpha$ is called a critical $k$-simplex and its birth time $f(\alpha)$ a critical value. When a critical $k$-simplex is added, either one generator is created in the $k$th homology group or one generator disappears in the $(k-1)$st homology group. In other words, adding a critical $k$-simplex at time $t=c$ corresponds either  to a point in the $k$th persistence diagram with birth time $b = c$, or to a point in the $(k-1)$st persistence diagram with dead time $d = c$. It turns out that homology groups do not change when an interval $[\tau, \sigma]$ with $\tau \subsetneq \sigma$  is added. That is to say, the cases (0) and (ii) do not effect homology groups. From those, the number of points in the $q$th persistence diagram can be calculated from the number of critical $k$-simplicies ($k \le q$). The aim of this paper is to give a rigorous proof of the arguments above.

As an application, we study persistence diagrams of the \v{C}ech filtration built over binomial point processes. Let $\{X_n\}_{n \ge 1}$ be an i.i.d.~(independent identically distributed) sequence of $\R^N$-valued random variables with bounded probability density function $\kappa(x)$ (with respect to the Lebesgue measure on $\R^N$). The union of the first $n$ points, $\cX_n = \{X_1, X_2, \dots, X_n\}$ is called a binomial point process. Random \v{C}ech complexes in this setting have been extensively studied. Refer to \cite{BK} for a survey. Let $PD_q(n^{1/N} \cX_n)$ be the $q$th persistence diagram of the \v{C}ech filtration built over 
\[
n^{1/N} \cX_n = \{n^{1/N}X_1, n^{1/N}X_2, \dots, n^{1/N}X_n\}
\] 
and let $\xi_{q,n}$ be its counting measure
\[
	\xi_{q,n} = \sum_{(b,d) \in PD_q(n^{1/N} \cX_n)} \delta_{(b,d)}.
\]
Here $\delta_{(b,d)}$ is the Dirac measure at $(b,d)$. Then for $1 \le q \le N-1$, as a random measure on 
\[
	\Delta = \{(b,d) : 0 \le b < d < \infty\},
\]
almost surely, $n^{-1}\xi_{q,n}$ converges vaguely as $n \to \infty$ to a nonrandom measure $\nu_{q,\kappa}$ which can be expressed in terms of the limiting measure in the homogeneous Poisson point process setting \cite{Goel-2018}.

The limiting behavior of critical simplices has been studied \cite{Bobrowski-Adler-2014,Bobrowski-Mukherjee-2015}. In order to deal with critical simplices, we need the following assumption.

{\bf Assumption A.}
The probability density function $\kappa(x)$ has convex and compact support $S$, and
\[
	0 < \inf_{x \in S} \kappa(x) \le \sup_{x \in S} \kappa(x) < \infty.
\]
Under Assumption A, the weak law of large numbers and the central limit theorem for the  number of critical $k$-simplices in $\{\cC_t(n^{1/N}\cX_n)\}_{t \ge 0}$ have been established \cite{Bobrowski-Adler-2014}.  We improve in this paper by showing the strong law of large numbers. This result has interesting consequences which we summarize in the following.
\begin{theorem}
Let $1\le q \le N-1$. Then under Assumption A, the following hold.
\begin{itemize}
\item[\rm(i)] Almost surely,
	\[
		\frac{\xi_{q, n} }{n} \to \nu_{q, \kappa} \quad \text{vaguely as $n \to \infty$}.
	\]
The total mass $\nu_{q,\kappa}(\Delta)$ does not depend on $\kappa$.

\item[\rm(ii)] Almost surely, 
	\[
		\frac{\xi_{q, n}(\Delta) }{n} \to M_{N,q} \quad \text{as $n \to \infty$},
	\]
where $M_{N,q}$ is a constant depending only on $N$ and $q$.

\item[\rm(iii)] Almost surely, 
\[
	\frac{1}{n}\sum_{(b, d) \in PD_q(n^{1/N} \cX_n)} \delta_{(d- b)} \to (M_{N,q} - \nu_{q, \kappa}(\Delta)) \delta_0 + \nu_{q, \kappa} \circ pr^{-1} \quad\text{weakly as $n \to \infty$}.
\]
Here $pr \colon \Delta \ni (b,d) \mapsto d-b \in (0, \infty)$.
\end{itemize}
\end{theorem}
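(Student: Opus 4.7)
The plan is to reduce all three statements to a strong law of large numbers for the numbers of critical simplices, using the bijection between critical simplices and persistence-diagram points established in the earlier sections of the paper. Write $c_k(n)$ for the number of critical $k$-simplices of $\{\cC_t(n^{1/N}\cX_n)\}_{t\ge 0}$ and $b_q(n)$, $d_q(n)$ for the numbers of births and deaths in $PD_q(n^{1/N}\cX_n)$.

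For $t$ larger than $\mathrm{diam}(\cX_n)$ the \v{C}ech complex coincides with the full simplex on $\cX_n$ and is contractible, so $\beta_0 = 1$ and $\beta_q = 0$ for $q\ge 1$. This forces $b_q(n) = d_q(n)$ for $q\ge 1$, $b_0(n) = n$ and $d_0(n) = n-1$; since no infinite bars appear in $PD_q$ for $q\ge 1$ we have $\xi_{q,n}(\Delta) = b_q(n)$. Combining with the identity $c_k(n) = b_k(n) + d_{k-1}(n)$ from the paper's main correspondence theorem and solving the resulting telescoping recursion yields
\[
\xi_{q,n}(\Delta) = \sum_{k=1}^{q}(-1)^{q-k} c_k(n) + (-1)^q(n-1).
\]
Part~(ii) therefore reduces to proving an a.s.\ strong law $c_k(n)/n \to \gamma_k(N)$ with $\kappa$-independent limit. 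The weak law of large numbers is available from \cite{Bobrowski-Adler-2014}, and a standard local-Poisson argument (change of variables to local coordinates near each point, using that for a homogeneous Poisson process in $\R^N$ the expected number of critical $k$-simplices per point is a dimensionless constant independent of intensity, by scale invariance) identifies the limit as a $\kappa$-independent $\gamma_k(N)$. To upgrade WLLN to SLLN I would exploit the locality of critical simplices: whether a $(k+1)$-tuple is critical is determined only by the points inside its critical ball, whose radius is $O(1)$ in rescaled coordinates. This gives a bounded-difference inequality in which replacing a single $X_i$ by an independent copy alters $c_k(n)$ by at most $O(Z_i^k)$, with $Z_i$ the number of rescaled points in a bounded neighbourhood of $X_i$; Assumption~A makes $Z_i$ have exponential tails uniformly in $n$, so a McDiarmid bound plus Borel--Cantelli delivers the SLLN. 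Substituting gives (ii) with $M_{N,q} = \sum_{k=1}^{q}(-1)^{q-k}\gamma_k(N) + (-1)^q$, manifestly independent of $\kappa$.

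For part~(i), the same SLLN strategy applied to the local counts $\xi_{q,n}(B)$ for boxes $B$ with $\overline{B}\subset\Delta$ (each is again a local functional of $\cX_n$ via the critical-simplex correspondence: a point of $PD_q$ lies in $B$ iff its paired pair of critical simplices has birth radii in the prescribed intervals) produces $\xi_{q,n}(B)/n \to \nu_{q,\kappa}(B)$ almost surely; running over a countable base then upgrades this to a.s.~vague convergence, identifying $\nu_{q,\kappa}$. The $\kappa$-independence of $\nu_{q,\kappa}(\Delta)$ restates (ii) and follows from the same argument. Part~(iii) is a soft corollary: Assumption~A forces birth and death times to be bounded by $\mathrm{diam}(S)$, so in the vague limit on $\Delta$ mass can escape only through the diagonal $\{b=d\}$; the escaped mass $M_{N,q}-\nu_{q,\kappa}(\Delta)$ pushes forward under $pr$ to a point mass at $0$, giving the stated weak limit on $[0,\infty)$.

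The main obstacle is making the locality argument quantitative: a single point perturbation can a priori affect $\binom{n}{k}$ critical tuples, so one must argue that in the rescaled filtration each point participates in only $O(1)$ critical simplices with overwhelming probability. Assumption~A (bounded density on convex compact support) is precisely what makes this truncation go through, and controlling the resulting bounded-difference constants is the only real technical input needed to lift the Bobrowski--Adler weak law to the strong law that drives the entire theorem.
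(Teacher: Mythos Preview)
Your reduction of (ii) to the formula $\xi_{q,n}(\Delta)=\sum_{k=1}^{q}(-1)^{q-k}c_k(n)+(-1)^q(n-1)$ is exactly the paper's Theorem~3.1, and the limit constant you write down is the paper's $M_{N,q}$. The paper, however, does not prove the SLLN for $c_k(n)$ by McDiarmid; it proves a general SLLN (Theorem~4.2, Appendix~A) via Burkholder's martingale moment inequality, requiring only $\sup_n\Ex|c_k(\cX_n)-c_k(\cX_{n-1})|^p<\infty$ for some $p>2$. That hypothesis is tailored to your own observation that the add-one cost is $O(Z_i^k)$ with $Z_i$ Poisson-like rather than deterministically bounded; standard McDiarmid needs almost-sure bounds on the differences, so your route would require an additional truncation layer that the paper's approach sidesteps. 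For (i) the paper does not argue locality of box-counts at all but simply cites the vague convergence from \cite{Goel-2018}; the $\kappa$-independence of $\nu_{q,\kappa}(\Delta)$ there comes from the scaling identity $\nu_q^\lambda(A)=\lambda\,\nu_q^1(\lambda^{1/N}A)$, not from (ii), and indeed the paper explicitly leaves open whether $M_{N,q}=\nu_{q,\kappa}(\Delta)$, so your sentence ``the $\kappa$-independence of $\nu_{q,\kappa}(\Delta)$ restates (ii)'' overclaims.

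There is a genuine gap in your argument for (iii). You assert that Assumption~A bounds birth and death times by $\mathrm{diam}(S)$, so mass can escape only through the diagonal. But the persistence diagram is that of $n^{1/N}\cX_n$, whose diameter is $n^{1/N}\mathrm{diam}(S)\to\infty$; death times are \emph{not} uniformly bounded in $n$, and mass could a~priori leak toward $\{d=\infty\}$ as well as toward $\{b=d\}$. The paper isolates exactly this issue as a separate tightness condition (item~(ii) of its deterministic Lemma~4.4): one needs $\lim_{k}\limsup_n n^{-1}\xi_{q,n}(\{d\ge r_k\})=0$, and the paper verifies it by appealing to the SLLN for critical simplices with radius in a fixed thermodynamic window (cf.~\cite{Bobrowski-Adler-2014}). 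Without supplying that ingredient your passage from vague convergence on $\Delta$ plus total-mass convergence to weak convergence of the bar-length distribution on $[0,\infty)$ does not go through.
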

That the relation $M_{N,q} = \nu_{q,\kappa}(\Delta)$ holds or not is still open.

\section{Persistent homology of generalized discrete Morse functions}

Let $\bfk$ be a field. For a simplicial complex $\cK$, denote by $C_q(\cK), Z_q(\cK)$ and $B_q(\cK)$ the $q$th chain group, the $q$th cycle group and the $q$th boundary group, respectively. Notations are taken from \cite{HST-2018}.

For a right continuous filtration of simplicial complexes $\K = \{\cK_t\}_{t \ge 0}$ whose persistent homology is assumed to be tame, let
\[
	\beta_q^{r, s} = \beta_q^{r, s}(\K) = \dim \frac{Z_q(\cK_r)}{Z_q(\cK_r) \cap B_q(\cK_s)}, \quad (0 \le r \le s < \infty),
\]
be the $q$th persistent Betti numbers. As a function of $(r,s)$, $\beta_q^{r, s}$ becomes the `distribution function' of the $q$th persistence diagram. In fact, let $\xi_q$ be the counting measure on $\bar \Delta := \{(b,d) : 0 \le b < d \le \infty\}$ defined by
\[
	\xi_q = \sum_{(b,d) \in PD_q(\K)} \delta_{(b,d)},
\]
where $PD_q(\K)$ denotes the $q$th persistence diagram of the persistent homology of the filtration $\K$. Then
\[
	\beta_q^{r,s} = \xi_q([0, r] \times (s, \infty]).
\]

Let $\cK$ be a finite simplicial complex. For $\tau \subset \sigma$, the interval $[\tau, \sigma]$ of simplices is defined as 
\[
	[\tau, \sigma] := \{\alpha : \tau \subset \alpha \subset \sigma\}. 
\]
It contains a single simplex in case $\tau = \sigma$.
Let $V$ be a partition of $\cK$ into intervals. Then a function $f \colon \cK \to \R$ is called a generalized discrete Morse function with generalized discrete gradient $V$ if
\begin{itemize}
	\item[(i)] $f (\tau) \le f(\sigma)$, whenever $\tau \subset \sigma$; and
	\item[(ii)] for $\tau \subset \sigma$, the equality $f(\tau) = f(\sigma)$ holds iff $\tau$ and $\sigma$ belong to the same interval in $V$. 
\end{itemize}
Refer to \cite{Bauer-2017} for the usage of terminologies.

Let $f \colon \cK \to [0, \infty)$ be a generalized discrete Morse function with $f(\{v\}) = 0$ for all vertices $v$. For $t \ge 0$, let 
\[
	\cK_t = f^{-1}([0, t]) = \{\sigma \in \cK : f(\sigma) \le t\}.
\]
Assume that the value of $f$ on each interval (except vertices) is different. Then for $t > 0$, either of the following holds
\begin{itemize}
	\item[(0)]	$\cK_t = \cK_t^-$;
	\item[(i)]	$\cK_t = \cK_t^- \sqcup \{\alpha\}$;
	\item[(ii)]	$\cK_t = \cK_t^- \sqcup [\tau, \sigma]$, for $\tau \subsetneq \sigma$.
\end{itemize}
When a single $k$-simplex $\alpha$ is added at time $t = f(\alpha)$, we call $\alpha$ a critical $k$-simplex and its birth time $f(\alpha)$ a critical value.

Now consider the persistent homology of $ \{\cK_t\}_{t \ge 0}$ which is tame because $\cK$ is finite.  Let $N_k, k \ge 1$, be the number of critical $k$-simplices. Let $N_0$ be the number of vertices in $\cK$. Let $M_q$ be the number of finite points (points $(b,d)$ with $d < \infty$) in $PD_q$. At the end of this section, we show that a critical $k$-simplex with critical value $t = c$ corresponds to either a point in $PD_k$ with $b = c$ or a point in $PD_{k - 1}$ with $d = c$. Consequently, the following relations hold
\begin{align*}
	N_0 &= M_0 + \beta_0 (\cK),\\
	N_1 &= M_0 + M_1 + \beta_1(\cK), \dots,\\
	N_q &= M_{q - 1} + M_{q} + \beta_q(\cK).
\end{align*}
Here $\beta_k(\cK)$ is the $k$th Betti number of $\cK$. Consequently, the following holds.
\begin{theorem}\label{thm:PD-critical}
For $q \ge 0$,
\[
	M_q = N_q - N_{q -1} + \cdots + (-1)^q N_0 - (\beta_q(\cK) - \beta_{q-1}(\cK) + \cdots + (-1)^q \beta_0(\cK)).
\]
\end{theorem}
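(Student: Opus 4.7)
The plan is to derive the theorem as a purely algebraic consequence of the three displayed relations immediately preceding it, namely $N_0 = M_0 + \beta_0(\cK)$ and $N_k = M_{k-1} + M_k + \beta_k(\cK)$ for $1 \le k \le q$. The cleanest route is to form the signed alternating sum of these recursions; the $M$-contributions then telescope down to the single term $M_q$, leaving only the $N_k$'s and $\beta_k(\cK)$'s, which rearranges to the claimed formula.

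To execute this, I would first adopt the convention $M_{-1} := 0$, so that the uniform identity $N_j = M_{j-1} + M_j + \beta_j(\cK)$ holds for every $j \ge 0$. Multiplying by $(-1)^{q-j}$ and summing over $j = 0, 1, \dots, q$ gives
\[
\sum_{j=0}^{q} (-1)^{q-j} N_j \;=\; \sum_{j=0}^{q} (-1)^{q-j} M_{j-1} \;+\; \sum_{j=0}^{q} (-1)^{q-j} M_j \;+\; \sum_{j=0}^{q} (-1)^{q-j} \beta_j(\cK).
\]
The index shift $i = j - 1$ turns the first sum on the right into $-\sum_{i=0}^{q-1} (-1)^{q-i} M_i$ (using $M_{-1} = 0$), and this precisely cancels all but the endpoint $M_q$ of the second sum. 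Hence $\sum_{j=0}^{q} (-1)^{q-j} N_j = M_q + \sum_{j=0}^{q} (-1)^{q-j} \beta_j(\cK)$, and isolating $M_q$ yields the formula in the theorem.

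The only genuine content underlying this manipulation lies in the three recursions themselves, which the paper has already reduced to two structural facts to be established earlier in Section~2: that adding an interval $[\tau, \sigma]$ with $\tau \subsetneq \sigma$ does not change homology, and that each critical $k$-simplex with critical value $c$ contributes exactly one point, either to $PD_k$ with birth time $c$ or to $PD_{k-1}$ with death time $c$. Granted that correspondence, the $N_k$ critical $k$-simplices split into the $M_{k-1}$ finite deaths they produce in dimension $k-1$ and the $M_k + \beta_k(\cK)$ births (finite plus infinite) they produce in dimension $k$. No substantive obstacle remains beyond this point; the main subtlety is the bookkeeping around the boundary case $k = 0$, which the convention $M_{-1} = 0$ absorbs cleanly.
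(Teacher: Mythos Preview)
Your proposal is correct and matches the paper's own approach: the theorem is stated as an immediate consequence (``Consequently, the following holds'') of the recursions $N_0 = M_0 + \beta_0(\cK)$ and $N_k = M_{k-1} + M_k + \beta_k(\cK)$, and the paper likewise defers the substantive content of those recursions to Theorem~\ref{thm:critical} and the supporting lemmas. Your telescoping alternating sum is exactly the intended manipulation.
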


The above relations will be proved through several lemmata. In what follows, the notation $\alpha^{(p)}$ is used to indicate that $\alpha$ is a $p$-simplex. 

\begin{lemma}\label{lem:add-one-simplex}
Assume that $\cK\subset \tilde\cK$ are simplicial complexes with  $\tilde \cK = \cK \sqcup \alpha^{(p)}$. Then  
\[
	Z_{p - 1} (\tilde \cK) = Z_{p - 1}(\cK), \quad B_p(\tilde \cK) = B_p(\cK),
\]
and,
\[
\begin{cases}
	B_{p - 1}(\tilde \cK) = B_{p - 1}(\cK), \quad Z_p(\tilde \cK) \cong Z_p(\cK) \oplus \bfk, &\text{if } \partial_p(\bra{\alpha}) \in B_{p - 1}(\cK),\\
	B_{p - 1}(\tilde \cK) = B_{p - 1}(\cK) \oplus \bfk \partial_p(\bra{\alpha}) , \quad Z_p(\tilde \cK) = Z_p(\cK), &\text{if } \partial_p(\bra{\alpha}) \not\in B_{p - 1}(\cK).
\end{cases}
\]
Here $\partial_p \colon C_p(\cK) \to C_{p-1}(\cK)$ is the $p$th boundary operator and the notation `$\oplus$' denotes the direct sum of vector spaces.
\end{lemma}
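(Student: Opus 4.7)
The plan is to exploit the fact that $\tilde\cK$ differs from $\cK$ only by adjoining a single $p$-simplex, so the chain groups decompose cleanly. Concretely, since $\alpha$ is a $p$-simplex and its proper faces already lie in $\cK$ (as $\cK$ is a simplicial complex and $\tilde\cK$ contains $\alpha$), we have $C_q(\tilde\cK) = C_q(\cK)$ for $q \ne p$, and $C_p(\tilde\cK) = C_p(\cK) \oplus \bfk \lr{\alpha}$, while the boundary maps of $\tilde\cK$ restrict to those of $\cK$ on $C_\ast(\cK)$. The element $\partial_p(\lr{\alpha})$ lies in $C_{p-1}(\cK)$, which is the reason the dichotomy in the statement is even formulated in terms of membership in $B_{p-1}(\cK)$.

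The two identities $Z_{p-1}(\tilde\cK) = Z_{p-1}(\cK)$ and $B_p(\tilde\cK) = B_p(\cK)$ are immediate from the setup, since $Z_{p-1}$ is defined via $\partial_{p-1}$ acting on $C_{p-1}$ and $B_p$ is the image of $\partial_{p+1}$ acting on $C_{p+1}$; in both cases nothing has changed. I would dispatch these in one line.

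Next I would handle $B_{p-1}(\tilde\cK)$ by direct computation: $B_{p-1}(\tilde\cK) = \partial_p(C_p(\tilde\cK)) = \partial_p(C_p(\cK)) + \bfk \partial_p(\lr{\alpha}) = B_{p-1}(\cK) + \bfk \partial_p(\lr{\alpha})$. If $\partial_p(\lr{\alpha}) \in B_{p-1}(\cK)$ the second summand is absorbed; otherwise the sum is direct. This gives both rows of the displayed dichotomy for $B_{p-1}$.

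For $Z_p(\tilde\cK)$, I write a general element as $c + \lambda \lr{\alpha}$ with $c \in C_p(\cK)$, $\lambda \in \bfk$, and note that the cycle condition reads $\partial_p c = -\lambda \partial_p(\lr{\alpha})$ in $C_{p-1}(\cK)$. In the case $\partial_p(\lr{\alpha}) \notin B_{p-1}(\cK)$, any nonzero $\lambda$ would place $\partial_p(\lr{\alpha})$ into $B_{p-1}(\cK)$, a contradiction, so $\lambda = 0$ and $c \in Z_p(\cK)$, giving $Z_p(\tilde\cK) = Z_p(\cK)$. In the case $\partial_p(\lr{\alpha}) = \partial_p(c_0)$ for some $c_0 \in C_p(\cK)$, the chain $\lr{\alpha} - c_0$ is a cycle in $\tilde\cK$ that is visibly independent from $Z_p(\cK)$ (because its $\lr{\alpha}$-coefficient is nonzero), and every cycle $c + \lambda \lr{\alpha}$ equals $\lambda(\lr{\alpha} - c_0) + (c + \lambda c_0)$ with the second summand in $Z_p(\cK)$; hence $Z_p(\tilde\cK) = Z_p(\cK) \oplus \bfk(\lr{\alpha} - c_0) \cong Z_p(\cK) \oplus \bfk$. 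No step is really an obstacle — the only thing to be careful about is checking that $c + \lambda c_0$ is indeed a cycle in $\cK$ (it is, by the cycle equation for $c + \lambda\lr{\alpha}$), so that the decomposition above is genuinely internal to $Z_p(\cK)$.
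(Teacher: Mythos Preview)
Your proof is correct and follows essentially the same structure as the paper's: both begin from the decomposition $C_p(\tilde\cK) = C_p(\cK) \oplus \bfk\lr{\alpha}$ with $C_q$ unchanged for $q\neq p$, and both handle $Z_{p-1}$, $B_p$, and $B_{p-1}$ in the same way. The one difference is in the treatment of $Z_p$: the paper infers the dimension of $Z_p(\tilde\cK)$ from that of $B_{p-1}(\tilde\cK)$ via the rank--nullity theorem applied to $\partial_p$, whereas you analyze a general element $c+\lambda\lr{\alpha}$ directly. Your route is equally elementary and has the small bonus of exhibiting an explicit generator $\lr{\alpha}-c_0$ for the new $\bfk$-summand, while the paper's dimension count is slightly quicker.
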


\begin{proof}
Since $\tilde \cK = \cK \sqcup \alpha^{(p)}$, it follows that 
\[
	C_k(\tilde \cK) = \begin{cases}
		C_k(\cK) \oplus \bfk \bra{\alpha}, &\text{if } k = p,\\
		C_k(\cK), &\text{if }k \neq p.
	\end{cases}
\]
Note that $B_{p}(\cK) = \image \partial_{p+1}$ and $Z_p(\cK) = \ker \partial_p$.
Then it is clear that 
\[
	Z_{p - 1} (\tilde \cK) = Z_{p - 1}(\cK), \quad B_p(\tilde \cK) = B_p(\cK).
\]

In case $\partial_p(\bra{\alpha}) \in B_{p - 1}(\cK)$, then  $B_{p - 1}(\tilde \cK) = B_{p - 1}(\cK)$. Consequently, by the rank--nullity theorem,
\begin{align*}
	\dim Z_p(\tilde \cK) &= \dim C_p(\tilde \cK) - \dim B_{p-1}(\tilde \cK) \\
	&= 1 + \dim C_p( \cK) - \dim B_{p-1}( \cK) = 1 +  \dim Z_p(\cK),
\end{align*}
which implies
\[
	\quad Z_p(\tilde \cK) \cong Z_p(\cK) \oplus \bfk.
\]

Conversely, when $\partial_p(\bra{\alpha}) \notin B_{p - 1}(\cK)$, then clearly $B_{p - 1}(\tilde \cK) = B_{p - 1}(\cK) \oplus \bfk \partial_p(\bra{\alpha})$. By using the rank--nullity theorem again, it follows that $Z_p(\tilde \cK) = Z_p(\cK)$. The proof is complete.
\end{proof}

\begin{lemma}\label{lem:add-two}
Assume that $\cK\subset \tilde\cK$ are simplicial complexes with $\tilde \cK = \cK \sqcup \{\alpha^{(p)} \subset \beta^{(p+1)}\}$. Then only the $p$th boundary and the $p$th cycle group change in a way that
\[
	B_{p}(\tilde \cK) = B_p(\cK) \oplus \bfk \partial_{p+1}(\bra{\beta}),\quad Z_{p}(\tilde \cK) = Z_p(\cK) \oplus \bfk \partial_{p+1}(\bra{\beta}).
\]
Consequently,
\begin{equation}\label{persistent}
	Z_p(\cK) \cap B_p(\tilde \cK) = B_p(\cK). 
\end{equation}
\end{lemma}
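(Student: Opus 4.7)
The plan is to decompose the addition of the pair into two steps and apply Lemma~\ref{lem:add-one-simplex} at each. Set $\cK' := \cK \sqcup \{\alpha\}$, which is a simplicial complex because every proper face of $\alpha$ is also a proper face of $\beta$, hence lies in $\tilde\cK \setminus \{\alpha,\beta\} = \cK$. Then $\tilde\cK = \cK' \sqcup \{\beta\}$.

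For the first step I first verify that $\partial_p(\bra{\alpha}) \in B_{p-1}(\cK)$. Since $\alpha$ is a codimension-one face of $\beta$ and the other $p$-faces of $\beta$ lie in $\cK$, one can write $\partial_{p+1}(\bra{\beta}) = \pm\bra{\alpha} + \gamma$ with $\gamma \in C_p(\cK)$. Applying $\partial_p$ and using $\partial_p\partial_{p+1}=0$ gives $\partial_p(\bra{\alpha}) = \mp\partial_p(\gamma) \in B_{p-1}(\cK)$. This places us in the first case of Lemma~\ref{lem:add-one-simplex}, giving $Z_{p-1}(\cK')=Z_{p-1}(\cK)$, $B_{p-1}(\cK')=B_{p-1}(\cK)$, $B_p(\cK')=B_p(\cK)$, and $\dim Z_p(\cK') = \dim Z_p(\cK)+1$. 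I exhibit a concrete new generator: $\partial_{p+1}(\bra{\beta})$ lies in $C_p(\cK')$, is a cycle since $\partial_p\partial_{p+1}=0$, and does not lie in $Z_p(\cK)$ because the coefficient of $\bra{\alpha}$ is nonzero. Hence
\[
Z_p(\cK') = Z_p(\cK)\oplus \bfk\,\partial_{p+1}(\bra{\beta}).
\]

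For the second step I apply Lemma~\ref{lem:add-one-simplex} to the passage $\cK' \subset \tilde\cK$ adding the $(p+1)$-simplex $\beta$ (so the indexing of the lemma is shifted by one). Since $B_p(\cK')=B_p(\cK)\subset C_p(\cK)$ and $\partial_{p+1}(\bra{\beta})$ has a nonzero coefficient on $\bra{\alpha}\notin C_p(\cK)$, we have $\partial_{p+1}(\bra{\beta})\notin B_p(\cK')$, so we fall into the second case of the lemma. This yields $B_p(\tilde\cK) = B_p(\cK')\oplus \bfk\,\partial_{p+1}(\bra{\beta}) = B_p(\cK)\oplus \bfk\,\partial_{p+1}(\bra{\beta})$ and $Z_{p+1}(\tilde\cK)=Z_{p+1}(\cK')$; the lemma also gives $Z_p(\tilde\cK)=Z_p(\cK')$ and $B_{p+1}(\tilde\cK)=B_{p+1}(\cK')$. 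Combining with step one and the fact that $\cK'$ differs from $\cK$ only in degree $p$, all groups $Z_{p\pm 1}, B_{p\pm 1}, Z_{p+1}, B_{p+1}$ remain equal to their values on $\cK$, while the claimed formulas for $Z_p(\tilde\cK)$ and $B_p(\tilde\cK)$ follow.

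For the final identity~\eqref{persistent}, one inclusion is immediate from $B_p(\cK)\subset Z_p(\cK)\cap B_p(\tilde\cK)$. Conversely, any $z\in Z_p(\cK)\cap B_p(\tilde\cK)$ can be written $z = b + \lambda\,\partial_{p+1}(\bra{\beta})$ with $b\in B_p(\cK)$, $\lambda\in\bfk$; since $z-b\in C_p(\cK)$ while $\partial_{p+1}(\bra{\beta})$ has a $\pm 1$ coefficient on the simplex $\alpha\notin\cK$, we must have $\lambda=0$. The only real obstacle in this plan is the observation in step one that $\partial_p(\bra{\alpha})\in B_{p-1}(\cK)$: this is precisely where the hypothesis $\alpha\subset\beta$ enters and is what ensures that adding the pair $\{\alpha,\beta\}$ perturbs only the degree-$p$ groups; once this is in hand, everything else is bookkeeping with Lemma~\ref{lem:add-one-simplex}.
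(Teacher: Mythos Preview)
Your proof is correct and follows essentially the same approach as the paper. The key step---using $\partial_p\partial_{p+1}=0$ on the expansion $\partial_{p+1}(\bra{\beta}) = \pm\bra{\alpha} + \gamma$ to force $\partial_p(\bra{\alpha})\in B_{p-1}(\cK)$---is exactly what the paper does; your only difference is that you organize the argument as two explicit applications of Lemma~\ref{lem:add-one-simplex} via the intermediate complex $\cK'=\cK\sqcup\{\alpha\}$, whereas the paper argues directly on $\tilde\cK$ and invokes the proof of Lemma~\ref{lem:add-one-simplex} informally.
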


\begin{proof}
Assume that $\tilde \cK = \cK \sqcup \{\alpha^{(p)} \subset \beta^{(p+1)}\}$. Then 
\[
	C_k(\tilde \cK) = \begin{cases}
		C_k(\cK) \oplus \bfk \bra{\alpha}, &\text{if } k = p,\\
		C_{k}(\cK) \oplus \bfk \bra{\beta}, &\text{if } k = p+1,\\
		C_k(\cK), &\text{if }k \notin \{p, p+1\}.
	\end{cases}
\]
Now, by the definition of the boundary operator, we have
\begin{equation}\label{boundary-of-beta}
	\partial_{p+1}(\bra{\beta}) = \pm \bra{\alpha} + \sum_{\sigma_j^{(p)} \subset \beta; \sigma_j^{(p)} \neq \alpha } \pm \bra{\sigma_j}.
\end{equation}
Clearly,  the sum in the above equation belongs to $C_p(\cK)$. By taking the boundary operator again, we deduce that $\partial_p(\bra{\alpha}) \in B_{p-1}(\cK)$ because $\partial_p \circ \partial_{p+1} = 0$. Thus, similarly as in the proof of Lemma~\ref{lem:add-one-simplex}, we obtain that
\[
	B_{p - 1}(\tilde \cK) = B_{p - 1}(\cK), \quad Z_p(\tilde \cK) \cong Z_p(\cK) \oplus \bfk.
\]
In this case, we can write explicitly
\[
	Z_p(\tilde \cK) = Z_p(\cK) \oplus \bfk \partial_{p+1}(\bra{\beta}).
\]
Note that adding $\{\alpha^{(p)} \subset \beta^{(p+1)}\}$ does not effect the $(p-1)$st cycle group, that is, $Z_{p-1}(\tilde \cK) = Z_{p-1}(\cK)$.

Next we consider the boundary operator $\partial_{p+1}$,
\[
	\partial_{p + 1} \colon C_{p+1} (\cK) \to C_p(\cK), \quad \partial_{p + 1} \colon C_{p+1} (\cK) \oplus \bfk \bra{\beta} \to C_p(\cK) \oplus \bfk \bra{\alpha}.
\]
It follows from the expression of $\partial_{p+1}(\bra{\beta})$ in \eqref{boundary-of-beta} that  
\[
	B_{p}(\tilde \cK) = B_p(\cK) \oplus \bfk\partial_{p+1}(\bra{\beta}).
\] 
Then $Z_{p+1}(\tilde \cK) = Z_{p+1}(\cK)$ by comparing their dimensions using the rank--nullity theorem. The second statement is an easy consequence of the first one.
The proof is complete.
\end{proof}

%

We have shown that 
\[
		Z_k(\cK) \cap B_k(\tilde \cK) = B_k(\cK), (k \ge 0),
\]
if $\tilde \cK = \cK \sqcup \{\alpha^{(p)} < \beta^{(p+1)}\}$. (It'd better to write $\alpha^{(p)} < \beta^{(p+1)}$ here instead of $\alpha^{(p)} \subset \beta^{(p+1)}$.) By induction, the above relation still holds, if $\cK$ and $\tilde \cK$ is connected by a sequence of simplicial complexes $
	\cK = \cK_0 \subset \cK_1 \subset \cdots \subset \cK_n = \tilde \cK
$
with $\cK_i = \cK_{i - 1} \sqcup \{\alpha_i^{(p_i)} < \beta_i^{(p_i+1)}\}$. In this case, $\tilde \cK$ is said to collapse onto $\cK$. We claim that $\tilde \cK$ collapses onto $\cK$, if $\tilde \cK = \cK \sqcup [\tau, \sigma]$, for $\tau \subsetneq \sigma$. Indeed, choose an arbitrary vertex $x \in \sigma \setminus \tau$, and partition the interval $[\tau, \sigma]$ into pairs $\{\alpha \setminus \{x\}, \alpha \cup \{x\}\} $ with noting that 
\[
	\{\alpha \setminus \{x\}, \alpha \cup \{x\}\} = \begin{cases}
		\{\alpha \setminus \{x\}, \alpha\}, &\text{if $x \in \alpha$},\\
		\{\alpha, \alpha \cup \{x\}\}, &\text{if $x \not\in \alpha$}.
	\end{cases}
\]
Then arranging the pairs in a suitable order yields the desired result.

Theorem~\ref{thm:PD-critical} follows directly from the following result.

\begin{theorem}\label{thm:critical}
Let $\K=\{\cK_t\}_{t\ge0}$ be a right continuous filtration of simplicial complexes whose persistent homology is assumed to be tame.
\begin{itemize}
\item[\rm(i)]
Assume that only one $p$-simplex is added at time $c \in (u, v]$, that is, $\cK_c = \cK_{c}^- \sqcup \alpha^{(p)},$ and $ \cK_t = \cK_t^{-}$ for $t \in (u,v] \setminus \{c\}$. Then either $PD_{p-1}$ has only one point in the region $\{(b,d) : d \in (u, v]\}$ with $b \le u$ and $d = c$, or $PD_p$ has only one point in the region $\{(b,d) : b \in (u, v]\}$ with $b = c$ and $d > v$.

\item[\rm(ii)] Assume that for $u \le r \le s \le v$, 
\[
	Z_p(K_r) \cap B_p(K_s) = B_p(K_r).
\]
Then $PD_p$ has no point in the region 
	\[
		\{(b,d) : b \in (u, v] \text{ or } d \in (u,v]\}.
	\]
\end{itemize}
\end{theorem}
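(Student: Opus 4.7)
The plan is to treat the persistent Betti numbers $\beta_p^{r,s}$ as the ``distribution function'' of the counting measure $\xi_p$ and to exploit the inclusion--exclusion identity
\[
	\xi_p([r_1, r_2] \times (s_1, s_2]) = \beta_p^{r_2, s_1} - \beta_p^{r_1^-, s_1} - \beta_p^{r_2, s_2} + \beta_p^{r_1^-, s_2}
\]
for $r_1 \le r_2 \le s_1 \le s_2$, where $r_1^-$ denotes a value strictly less than $r_1$. Both parts of the theorem then reduce to computing $\beta_q^{r, s}$ on the square $[u, v]^2$ and reading off the resulting jumps.

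For part (i), I first note that no simplex other than $\alpha$ is added in $(u, v]$, so the filtration takes only two values on $[u, v]$: $\cK := \cK_c^-$ on $[u, c)$ and $\tilde \cK := \cK \sqcup \{\alpha^{(p)}\}$ on $[c, v]$. I would then split according to the dichotomy of Lemma~\ref{lem:add-one-simplex}. In the subcase $\partial_p \bra{\alpha} \in B_{p-1}(\cK)$, only the $p$th cycle group changes, gaining a new class $[z_\alpha]$ whose representative $z_\alpha = \bra{\alpha} - \gamma$ (with $\gamma \in C_p(\cK)$ satisfying $\partial_p \gamma = \partial_p \bra{\alpha}$) has an $\bra{\alpha}$-component and therefore cannot lie in $C_p(\cK)$, in particular not in $B_p(\cK) = B_p(\tilde \cK)$. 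A direct computation then yields
\[
	\beta_p^{r, s} = \begin{cases} \beta_p(\cK), & r < c, \\ \beta_p(\cK) + 1, & r \ge c, \end{cases}
\]
for all $u \le r \le s \le v$, while $\beta_q^{r, s}$ stays constant on $[u, v]^2$ for every $q \neq p$. Plugging this into the inclusion--exclusion formula isolates exactly one birth at $b = c$ in $PD_p$ with $d > v$ and no other points of any $PD_q$ in the prescribed region. The subcase $\partial_p \bra{\alpha} \notin B_{p-1}(\cK)$ is symmetric: now $B_{p-1}$ grows by $\bfk\, \partial_p \bra{\alpha}$, an element of $Z_{p-1}(\cK)$, so $\beta_{p-1}^{r, s}$ drops by one as $s$ crosses $c$ while every other $\beta_q^{r, s}$ stays constant, isolating exactly one death at $d = c$ in $PD_{p-1}$ with $b \le u$.

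For part (ii), the hypothesis immediately gives
\[
	\beta_p^{r, s} = \dim Z_p(\cK_r) / (Z_p(\cK_r) \cap B_p(\cK_s)) = \dim Z_p(\cK_r) / B_p(\cK_r) = \beta_p(\cK_r)
\]
throughout $\{(r, s) \in [u, v]^2 : r \le s\}$, so there $\beta_p^{r, s}$ depends only on $r$. Applying the inclusion--exclusion formula to any sub-rectangle whose corners lie in this region collapses all four terms to pairs of equal $\beta_p$-values, giving zero mass. Covering the prescribed region $\{b \in (u, v]\} \cup \{d \in (u, v]\}$ by such sub-rectangles, together with the boundary strips $[0, u] \times (u, v]$ and $(u, v] \times (v, \infty]$ (each of zero mass by the constancy of $\beta_p^{u, \cdot}$ and $\beta_p^{\cdot, v}$ on $[u, v]$), then forces the $\xi_p$-mass of the whole region to be zero.

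The main obstacle will be the bookkeeping in part (i): carefully tracking which cycle/boundary groups change at $c$ in each subcase, and verifying the crucial non-membership statements $z_\alpha \notin B_p(\cK)$ and $\partial_p \bra{\alpha} \in Z_{p-1}(\cK) \setminus B_{p-1}(\cK)$. Both ultimately follow from the dimension arithmetic in Lemma~\ref{lem:add-one-simplex} via rank--nullity.
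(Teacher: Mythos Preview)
Your approach is essentially the paper's: bifurcate (i) via Lemma~\ref{lem:add-one-simplex}, track how $\beta_q^{r,s}$ changes on $[u,v]^2$, and read off the diagram; for (ii), reduce $\beta_p^{r,s}$ to $\beta_p(\cK_r)$. The paper argues more tersely, reasoning directly from the monotonicity of $\beta_q^{r,s}$ in each variable rather than writing out the inclusion--exclusion identity, but the content is the same.

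One step in your handling of (ii) is not justified by the stated hypothesis. You claim ``constancy of $\beta_p^{\cdot,v}$ on $[u,v]$'' to dispose of the strip $(u,v]\times(v,\infty]$, but the hypothesis only gives $\beta_p^{r,v}=\dim Z_p(\cK_r)/B_p(\cK_r)=\beta_p(\cK_r)$, which may still depend on $r$. Indeed, your own first subcase of (i) furnishes a counterexample: when a single $p$-simplex with $\partial_p\bra{\alpha}\in B_{p-1}(\cK)$ is added at $c$, the group $B_p$ is constant on $[u,v]$, so $Z_p(\cK_r)\cap B_p(\cK_s)=B_p(\cK_r)$ holds trivially for all $u\le r\le s\le v$, yet $\beta_p(\cK_r)$ jumps at $r=c$ and $PD_p$ acquires a point with $b=c\in(u,v]$. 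Thus the hypothesis of (ii) alone does not exclude births in $(u,v]$. The paper's one-line ``from which the conclusion follows'' glosses over exactly the same point. What rescues the overall argument is that in each invocation of (ii) inside the proof of (i), both $Z_p$ and $B_p$ (respectively $Z_{p-1}$ and $B_{p-1}$) are individually constant on $[u,v]$, so that $\beta_p(\cK_r)$ is constant there; you should record this extra constancy explicitly when you appeal to (ii), rather than relying on the intersection hypothesis alone.
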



\begin{proof}
(i)~Since $\cK_c = \cK_{c}^- \sqcup \alpha^{(p)}$, it follows from Lemma~\ref{lem:add-one-simplex} that there are two cases to deal with.

{CASE 1:} $Z_p(\cK_t) = Z_p(\cK_u), t \in (u,v]$, and $B_{p - 1}(\cK_t) = B_{p-1}(\cK_u), t \in (u, c)$ and $B_{p - 1}(\cK_t) \cong B_{p - 1}(\cK_u) \oplus \bfk, t \in [c,v]$.

(a)~Let us consider $PD_{p-1}$. For fixed $r \in [u, v]$, the function of $s\in [r,v]$
		\begin{align*}
			\beta_{p-1}^{r, s}(\K) &= \dim \frac{Z_{p-1}(\cK_r)}{Z_{p-1}(\cK_r) \cap B_{p-1}(\cK_s)} \\
			&= \dim \frac{Z_{p-1}(\cK_v)}{Z_{p-1}(\cK_v) \cap B_{p-1}(\cK_s)} \\
			&=	\dim Z_{p-1}(\cK_v) - \dim B_{p-1}(\cK_s),  
		\end{align*}
decreases by one at $s = c$, (if $r < c$). Therefore, there is only one point $(b,d)$ with $d \in (u,v]$. Moreover, the point has $b \le u$ and $d = c$.
	
(b)~For $PD_{p}$, see (ii).

 {CASE 2:} $B_{p - 1}(\cK_t) = B_{p - 1}(\cK_u), t \in (u,v]$,  and $Z_p(\cK_t) = Z_p(\cK_u), t \in  (u,c)$ and $Z_p(\cK_t) = Z_p(\cK_u) \oplus \bfk, t \in  [c, v]$.

	(a)~For $PD_{p-1}$, also see (ii).
	
	(b)~Consider $PD_{p}$. Recall that $B_p(\cK_v) = B_p(\cK_u)$. For fixed $s \in [u, v]$, the function of $r \in [u,s]$
		\[
			\beta_p^{r, s}(\K) = \dim \frac{Z_p(\cK_r)}{Z_p(\cK_r) \cap B_p(\cK_s)} = \dim Z_p(\cK_r) - \dim B_p(\cK_u), \quad r \in [u, s]
		\]
increases by one at $r = c$, (if $s\ge c$). Therefore, there is only one point $(b,d)$ with $b \in (u,v]$. Moreover, the point has $b = c$ and $d > v$. The proof of (i) is complete.

\noindent (ii)~For $u \le r \le s \le v$, it follows from the relation $Z_p(K_r) \cap B_p(K_s) = B_p(K_r)$ that
	\[
		\beta_p^{r,s}(\K) = \dim \frac{Z_p(K_r)}{Z_p(K_r) \cap B_p(K_s)}  = \dim \frac{Z_p(K_r)}{B_p(K_r)},
	\]
from which the conclusion follows. Theorem~\ref{thm:critical} is proved.
\end{proof}

\section{\v{C}ech complexes and generalized discrete Morse theory}

\begin{definition}
A finite set $\cX \subset \R^N$ is in \emph{general position} if for every $\cP \subset \cX$ of at most $N + 1$ points,
\begin{itemize}
	\item[(i)]	$\cP$ is affinely independent, and 
	\item[(ii)] no point of $\cX \setminus \cP$ lies on the smallest circumsphere of $\cP$.
\end{itemize}
	 
\end{definition}

Assume that the points $\cX$ are in general position. Then the function $f$ defined in \eqref{f} is a generalized Morse function (associated with some generalized discrete gradient $V$) \cite{Bauer-2017} . Assume further that the value of $f$ on each interval in $V$ is different. Then for each $t> 0$, it holds that
\[
	\cC_t = \cC_t^-, \text{or} \quad  \cC_t = \cC_t^{-} \sqcup \{\alpha\}, \text{or} \quad \cC_t = \cC_t^- \sqcup[\tau, \sigma] \text{ with } \tau \subsetneq \sigma,
\] 
where recall that $\cC_t = \cC_t(\cX)$ is the \v{C}ech complex with parameter $t$.
From which, the following result follows from Theorem~\ref{thm:PD-critical}.

\begin{theorem}\label{thm:number-of-point-in-PD}
Let $\cX \subset \R^N$ be a set of $n$ points  in general position. Assume that the birth time of each interval is different.
	Let $N_k, (k =1, \dots, N)$ be the number of critical $k$-simplices  in $\{\cC_t(\cX)\}_{t \ge 0}$. Then
	\begin{align*}
		&\#PD_0 = n,\\
		&\#PD_1 = N_1 - (n - 1),\\
		&\dots\\
		&\#PD_q = N_q - N_{q - 1} + \cdots + (-1)^{q-1} N_1 + (-1)^q (n-1), \quad 1 \le q \le N-1.
	\end{align*}
Here $\#PD_q$ denotes the number of points in the $q$th persistence diagram of $\{\cC_t(\cX)\}_{t\ge0}$.
\end{theorem}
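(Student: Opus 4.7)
The plan is to specialize Theorem~\ref{thm:PD-critical} to the \v{C}ech filtration, taking the ambient complex to be the full simplex $\cK := 2^\cX \setminus \{\emptyset\}$ on $\cX$. This is justified because for sufficiently large $t$ the balls $\bar B_t(x)$ share a common point, so $\cC_t(\cX) = \cK$ eventually stabilizes; the persistent homology of $\{\cC_t(\cX)\}_{t \ge 0}$ therefore coincides with that of a filtration of $\cK$, and the counts $N_k$ of critical $k$-simplices in the two filtrations agree.

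The first step is to verify the hypotheses of Theorem~\ref{thm:PD-critical}. By the cited result of Bauer, the general position of $\cX$ guarantees that the birth-time function $f$ from \eqref{f} is a generalized discrete Morse function with some generalized discrete gradient $V$; the additional hypothesis that the birth times on distinct intervals of $V$ are all different then forces at each $t > 0$ exactly one of the three cases (0), (i), (ii) to occur, which is precisely the setup of Theorem~\ref{thm:PD-critical}. In particular, $N_0 = n$ since every vertex enters the filtration at $t = 0$ as an isolated critical $0$-simplex.

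Next I would read off the Betti numbers of $\cK$. Since $\cK$ is a single full simplex on $n$ vertices, it is contractible, so $\beta_0(\cK) = 1$ and $\beta_q(\cK) = 0$ for $q \ge 1$. Plugging these values into Theorem~\ref{thm:PD-critical} and using the cancellation $(-1)^q N_0 - (-1)^q \beta_0(\cK) = (-1)^q (n - 1)$ yields, for $q \ge 0$,
\[
	M_q = N_q - N_{q-1} + \cdots + (-1)^{q-1} N_1 + (-1)^q (n - 1),
\]
where $M_q$ is the number of finite points in $PD_q$.

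The last step is to pass from $M_q$ to $\#PD_q$. The infinite-death points of $PD_q$ are in bijection with a basis of the $q$th homology of $\cK$, so their number is exactly $\beta_q(\cK)$. Hence $\#PD_0 = M_0 + 1 = n$ and $\#PD_q = M_q$ for $1 \le q \le N - 1$, matching the stated formulas. The entire argument is bookkeeping layered on top of Theorem~\ref{thm:PD-critical}, and the only point needing genuine care is checking that the \v{C}ech filtration does satisfy the hypotheses of that theorem; once the generalized discrete Morse structure is supplied by the general-position plus distinct-birth-time assumptions, everything else is routine.
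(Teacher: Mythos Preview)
Your proposal is correct and follows exactly the route the paper takes: the paper simply states that the result ``follows from Theorem~\ref{thm:PD-critical}'' after noting that general position (via \cite{Bauer-2017}) plus the distinct-birth-time assumption puts the \v{C}ech filtration into the required form, and you have filled in the omitted bookkeeping (contractibility of the full simplex gives $\beta_0=1$, $\beta_q=0$ for $q\ge 1$, and the infinite bars account for the passage from $M_q$ to $\#PD_q$). One small wording quibble: all vertices enter simultaneously at $t=0$, so they are not critical in the sense of case~(i); the paper handles this by defining $N_0$ directly as the number of vertices, which gives the same value $N_0=n$ you use.
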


We conclude this section with some remarks on critical simplices.
A $k$-simplex $\alpha$ is critical, if  $f(\tau) <  f(\alpha) < f(\sigma)$, for any $\tau \subsetneq \alpha \subsetneq \sigma$. A criterion for this recovers the concept of critical simplices in \cite{Bobrowski-Adler-2014}. For a set $\alpha$ of $(k+1)$ points in general position in  $\R^N$, let 
\begin{align*}
	S(\alpha) &= \text{the unique $(k-1)$-dimensional sphere containing $\alpha$},\\
	C(\alpha) &= \text{the center of $S(\alpha)$ in $\R^N$},\\
	R(\alpha) &= \text{the radius of $S(\alpha)$},\\
	B(\alpha) (\bar B (\alpha)) &= \text{the open (closed) ball in $\R^N$ with radius $R(\alpha)$ centered at $C(\alpha)$},\\
	\conv^\circ(\alpha) &= \text{the open $k$-simplex spanned by the points in $\alpha$}.
\end{align*}

\begin{lemma}[\cite{Bobrowski-Adler-2014}]
Assume that a finite set $\cX \subset \R^N$ is in general position. Then a $k$-simplex $\alpha$ is critical in the filtration $\{\cC_t(\cX)\}_{t \ge 0}$, if 
\[
	C(\alpha) \in \conv^\circ(\alpha), \quad 
	\cX \setminus \alpha \notin \bar B(\alpha).
\]
\end{lemma}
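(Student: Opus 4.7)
The plan is to translate the criticality condition $f(\tau) < f(\alpha) < f(\sigma)$ for all $\tau \subsetneq \alpha \subsetneq \sigma$ into a statement about minimum enclosing balls, exploiting two classical facts: the minimum enclosing ball $\mathrm{MEB}(\beta)$ of a finite point set $\beta \subset \R^N$ is unique, and its center lies in the convex hull of the subset of $\beta$ sitting on its bounding sphere (the ``support''). The bridge is the observation that
\[
	f(\beta) = \inf\{t \ge 0 : \cap_{x \in \beta} \bar B_t(x) \ne \emptyset\}
\]
equals the radius $\rho(\beta)$ of $\mathrm{MEB}(\beta)$, which follows because the intersection of balls of radius $t$ centered at the points of $\beta$ is nonempty iff some point of $\R^N$ is within distance $t$ of every $x \in \beta$.

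Under the hypothesis $C(\alpha) \in \conv^\circ(\alpha)$, I will first check that $\bar B(\alpha)$ realizes $\mathrm{MEB}(\alpha)$: every vertex of $\alpha$ lies on $S(\alpha)$ and its center lies in $\conv(\alpha)$, which are exactly the conditions characterizing the MEB. Hence $f(\alpha) = R(\alpha)$. For the lower direction, fix $\tau \subsetneq \alpha$. Since $\tau \subset \bar B(\alpha)$, we have $\rho(\tau) \le R(\alpha)$; equality would force $\mathrm{MEB}(\tau) = \bar B(\alpha)$ by uniqueness. The support set $\tau' \subseteq \tau$ of this MEB then satisfies $C(\alpha) \in \conv(\tau') \subseteq \conv(\tau)$, but $\conv(\tau)$ is a proper face of $\conv(\alpha)$ and so cannot contain the relative interior point $C(\alpha)$---a contradiction, giving $f(\tau) < f(\alpha)$.

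For the upper direction, take any simplex $\sigma \subset \cX$ with $\alpha \subsetneq \sigma$, so $\sigma \setminus \alpha$ is a nonempty subset of $\cX \setminus \alpha$. Clearly $\rho(\sigma) \ge \rho(\alpha) = R(\alpha)$. If equality held, then $\mathrm{MEB}(\sigma)$ would be a ball of radius $R(\alpha)$ enclosing $\alpha$, and uniqueness of $\mathrm{MEB}(\alpha) = \bar B(\alpha)$ would give $\mathrm{MEB}(\sigma) = \bar B(\alpha)$. This forces $\sigma \subset \bar B(\alpha)$ and hence a point of $\cX \setminus \alpha$ in $\bar B(\alpha)$, contradicting the second hypothesis. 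Therefore $f(\alpha) < f(\sigma)$, verifying the critical simplex condition.

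The main point requiring care is invoking the uniqueness of the MEB and the convex-hull-of-support characterization precisely; these are classical results (going back to Sylvester) but deserve explicit citation. The general-position assumption enters, beyond ensuring $C(\alpha)$ and $R(\alpha)$ are well-defined, through the fact that $S(\alpha)$ is uniquely determined by $\alpha$ and contains no stray point of $\cX \setminus \alpha$, which is what upgrades the weak inequalities to the strict ones demanded by criticality.
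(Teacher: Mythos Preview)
The paper does not actually prove this lemma: it is stated with a citation to \cite{Bobrowski-Adler-2014} and no argument is supplied. Your proposal therefore cannot be compared against a proof in the paper, but it stands on its own as a correct and self-contained justification of the cited criterion.

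Your argument is sound. The identification $f(\beta)=\rho(\beta)$ with the minimum enclosing ball radius is standard, and the characterization of the MEB via ``center in the convex hull of the support'' is exactly what makes the circumscribed ball $\bar B(\alpha)$ the MEB of $\alpha$ once $C(\alpha)\in\conv^\circ(\alpha)$. The two strict inequalities then follow cleanly: for $\tau\subsetneq\alpha$, equality would place $C(\alpha)$ on a proper face of $\conv(\alpha)$, which is impossible for a relative-interior point (here you are implicitly using that $|\alpha|\le N+1$, so general position~(i) gives affine independence and hence that $\conv(\tau)$ really is a proper face); for $\alpha\subsetneq\sigma$, equality together with uniqueness of the MEB forces $\sigma\subset\bar B(\alpha)$, contradicting the second hypothesis.

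One small imprecision in your closing remarks: general position~(ii) is not what gives the strict inequalities in your argument---those come directly from the two hypotheses of the lemma. General position enters only through part~(i), ensuring affine independence of $\alpha$ so that $S(\alpha)$, $C(\alpha)$, $R(\alpha)$, and $\conv^\circ(\alpha)$ are well-defined and that $\conv(\tau)$ is genuinely a proper face. This does not affect the validity of the proof.
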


\section{Persistence diagrams of random \v{C}ech complexes}
Recall that $\{X_n\}_{n\ge 1}$ is an i.i.d.~sequence of $\R^N$-valued random variables with bounded probability density function $\kappa(x)$. We consider persistence diagrams of the \v{C}ech filtration built over $n^{1/N}\cX_n = \{n^{1/N}X_1,n^{1/N}X_2, \dots, n^{1/N}X_n\}$. Note that the assumption in Theorem~\ref{thm:number-of-point-in-PD} holds almost surely.

\subsection{Strong law of large numbers for the number of critical simplicies}

Let $	h_k(\sigma)$ be the indicator function of $k$-simplices with $
C(\sigma) \in \conv^\circ(\sigma)$. Then $\sigma$ is a critical $k$-simplex in $\{\cC_t(\cX)\}_{t \ge 0}$, if and only if $h_k(\sigma) = 1$ and 
$
	\cX \setminus \sigma \notin \bar B(\sigma).
$
Let $N_k(\cX_n)$ be the number of critical $k$-simplicies in $\{\cC_t(\cX_n)\}_{t\ge0}$. Then under Assumption A, it holds that \cite{Bobrowski-Adler-2014},
\[
	\frac{\Ex[N_k(\cX_n)]}{n} \to  \gamma_{N,k},
\]
where
\[
	 \gamma_{N,k} = \frac{1}{(k+1)!} \int_{(\R^N)^k}  h_{k}(\{0,y_1,\dots, y_k\}) e^{-\omega_N R(\{0,y_1,\dots, y_k\})^N} dy_1 \cdots dy_k,
\]
with $\omega_N$ the volume of the unit ball in $\R^N$. Some exact values of $\gamma_{N,k}$ were calculated in \cite{Bobrowski-Mukherjee-2015}
\begin{align*}
	&\gamma_{2,1} = 2,	\quad \gamma_{2,2} = 1,\\
	&\gamma_{3,1} = 4, \quad \gamma_{3,2}=3\left(1 + \frac{\pi^2}{16}\right), \quad \gamma_{3,3} = \frac{3 \pi^2}{16}.
\end{align*}
The weak law of large numbers and the central limit theorem for $N_k(\cX_n)$ were established \cite{Bobrowski-Adler-2014}. Here we show that the strong law of large numbers holds.

\begin{theorem}\label{thm:SLLN-critical}
	Under Assumption A, for $k = 1,2,\dots, N$,
	\[
		\frac{N_k(\cX_n)}{n} \to \gamma_{N, k} \quad \text{almost surely as $n \to \infty$}.
	\]
\end{theorem}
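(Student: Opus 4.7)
The plan is to upgrade the already-known convergence of the mean $\Ex[N_k(\cX_n)]/n \to \gamma_{N,k}$ to almost-sure convergence by means of a fourth-moment concentration estimate combined with the Borel--Cantelli lemma. Since
\[
\frac{N_k(\cX_n)}{n} - \gamma_{N,k} = \frac{N_k(\cX_n) - \Ex[N_k(\cX_n)]}{n} + \left(\frac{\Ex[N_k(\cX_n)]}{n} - \gamma_{N,k}\right),
\]
and the second term tends to zero by \cite{Bobrowski-Adler-2014}, it suffices to prove $(N_k(\cX_n) - \Ex N_k(\cX_n))/n \to 0$ almost surely. The target estimate is
\[
\Ex\bigl[(N_k(\cX_n) - \Ex N_k(\cX_n))^4\bigr] \le C\, n^2
\]
with $C$ independent of $n$. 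Markov's inequality then yields $\Prob(|N_k(\cX_n) - \Ex N_k(\cX_n)| > n\varepsilon) \le C/(n^2 \varepsilon^4)$, which is summable, and Borel--Cantelli closes the argument.

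For the fourth-moment bound I would invoke an Efron--Stein-type inequality for higher moments of functionals of independent random variables: setting $\Delta_i := N_k(\cX_n) - N_k(\cX_n^{(i)})$, where $\cX_n^{(i)}$ is obtained from $\cX_n$ by replacing $X_i$ with an independent copy $X_i'$, one has
\[
\Ex\bigl[(N_k(\cX_n) - \Ex N_k(\cX_n))^4\bigr] \le C_0\, n^2 \max_{1 \le i \le n} \Ex[\Delta_i^4],
\]
(this follows e.g.\ from the jackknife martingale inequality applied to the Doob decomposition with respect to the filtration $\sigma(X_1, \dots, X_i)$, together with $(\sum a_i)^2 \le n \sum a_i^2$). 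By exchangeability $\Ex[\Delta_i^4] = \Ex[\Delta_1^4]$, so the problem reduces to the uniform-in-$n$ bound $\Ex[\Delta_1^4] = O(1)$.

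To bound $\Ex[\Delta_1^4]$, I would observe that $\Delta_1$ is supported on three kinds of simplices: (a) critical simplices of $\cX_n$ containing $X_1$; (b) critical simplices of $\cX_n^{(1)}$ containing $X_1'$; and (c) simplices $\sigma \subset \cX_n \setminus \{X_1\}$ whose enclosing ball $\bar B(\sigma)$ is hit by exactly one of $X_1, X_1'$ and which would be critical in the corresponding configuration. Each of these counts is a sum of indicators indexed by tuples comprising $X_1$ (or $X_1'$) together with $k$ further points from $\cX_n$, with the Boolean constraint that no other point of $\cX_n$ lies in $\bar B(\sigma)$. Under Assumption~A, the upper bound $\kappa \le \kappa_{\max}$ controls the combinatorial count of candidate geometries, while the lower bound $\kappa \ge \kappa_{\min} > 0$ on the compact convex support yields, via a Palm/Mecke-type formula for the binomial process, the exponential suppression $(1 - \Prob(X_j \in \bar B(\sigma)))^{n - O(1)} \le \exp(-c\, n R(\sigma)^N)$ with a constant $c > 0$ uniform in the location of $X_1$. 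After a change of variables to simplex shape, the resulting moment integrals become finite analogues of the integral defining $\gamma_{N,k}$ and are uniformly bounded in $n$.

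The main obstacle is precisely this uniform moment bound on $\Delta_1$: the combinatorial number of $k$-simplices incident to a single point grows like $n^k$, and only the exponential Boolean-blocking factor --- which requires the strict lower density bound $\kappa_{\min} > 0$ from Assumption~A --- keeps the local count from blowing up. Once $\Ex[\Delta_1^4] = O(1)$ is secured, the rest of the argument is a routine application of the Efron--Stein-type inequality and Borel--Cantelli, yielding Theorem~\ref{thm:SLLN-critical}.
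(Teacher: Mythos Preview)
Your proposal is correct and follows essentially the same route as the paper: reduce to centered fluctuations, control them via a martingale (Efron--Stein/Burkholder) moment inequality with respect to the filtration $\sigma(X_1,\dots,X_i)$, bound the one-point increment moments uniformly in $n$ using Assumption~A, and finish with Markov plus Borel--Cantelli. The only cosmetic differences are that the paper phrases the key hypothesis as $\sup_n \Ex[|H_n(\cX_n)-H_n(\cX_{n-1})|^p]<\infty$ for some $p>2$ (remove-one-point) rather than your replace-one-point $\Delta_i$, and leaves the verification of that moment bound to the reader, whereas you sketch it explicitly.
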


The almost sure convergence follows from the following general result. A detailed proof is left to the reader.

\begin{theorem}[SLLN]\label{thm:SLLN}
 Let $\{X_n\}_{n \ge 1}$ be an i.i.d.~sequence of $\R^N$-valued random variables. Denote by $\cX_n = \{X_1, X_2, \dots, X_n\}$ the corresponding binomial processes.
Let $H_n$ be a real-valued functional defined on finite subsets of $\R^N$.
	Assume that for some $p > 2$, 
	\[
		\sup_{n} \Ex[|H_n(\cX_n) - H_n(\cX_{n - 1})|^p] < \infty.
	\]
Then almost surely,
	\[
		\frac{H_n(\cX_n) - \Ex[H_n(\cX_n)]}{n} \to 0 \quad \text{as} \quad n \to \infty.
	\]
\end{theorem}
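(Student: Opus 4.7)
The plan is to establish this by the standard Doob martingale method, combined with the Burkholder--Davis--Gundy inequality. First I would fix $n$, set $\F_0$ trivial and $\F_i = \sigma(X_1, \dots, X_i)$ for $1 \le i \le n$, and form the martingale differences
\[
D_{n,i} = \Ex[H_n(\cX_n) \mid \F_i] - \Ex[H_n(\cX_n) \mid \F_{i-1}],
\]
so that $M_n := H_n(\cX_n) - \Ex[H_n(\cX_n)] = \sum_{i=1}^n D_{n,i}$. To obtain a good bound on $D_{n,i}$ I would use a leave-one-out trick: let $\cX_n^{(i)} := \cX_n \setminus \{X_i\}$, which is independent of $X_i$, so $\Ex[H_n(\cX_n^{(i)}) \mid \F_i] = \Ex[H_n(\cX_n^{(i)}) \mid \F_{i-1}]$. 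Subtracting this common quantity from both conditional expectations defining $D_{n,i}$, and then combining Jensen's inequality with exchangeability of $(X_1, \dots, X_n)$ (which, because $H_n$ is a functional of the unordered set, yields $H_n(\cX_n) - H_n(\cX_n^{(i)}) \stackrel{d}{=} H_n(\cX_n) - H_n(\cX_{n-1})$), I get the uniform bound
\[
\Ex[|D_{n,i}|^p] \le 2^p \, \Ex[|H_n(\cX_n) - H_n(\cX_{n-1})|^p] \le 2^p C_0,
\]
where $C_0 := \sup_n \Ex[|H_n(\cX_n) - H_n(\cX_{n-1})|^p] < \infty$ by hypothesis.

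Next I would apply the Burkholder--Davis--Gundy inequality to $M_n$ and then Jensen's inequality for the power mean (valid since $p/2 \ge 1$) to write
\[
\Ex[|M_n|^p] \le C_p \, \Ex\Bigl[\bigl(\textstyle\sum_{i=1}^n D_{n,i}^2\bigr)^{p/2}\Bigr] \le C_p \, n^{p/2 - 1} \sum_{i=1}^n \Ex[|D_{n,i}|^p] \le C_p' \, n^{p/2}.
\]
Chebyshev's inequality then gives $\Prob(|M_n| > \varepsilon n) \le C_p' \varepsilon^{-p} n^{-p/2}$ for every $\varepsilon > 0$, and because $p > 2$ this is summable in $n$. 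The Borel--Cantelli lemma delivers $M_n/n \to 0$ almost surely, which is the claim.

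The argument contains no deep obstacle; the one subtle point is the symmetrization identifying the laws of $H_n(\cX_n) - H_n(\cX_n^{(i)})$ and $H_n(\cX_n) - H_n(\cX_{n-1})$, which uses implicitly that $H_n$ is defined on unordered finite subsets and hence invariant under permutations of its arguments. The quantitative strength of the moment hypothesis enters only once, at the Borel--Cantelli step: the exponent $p > 2$ is exactly what is needed to make $n^{-p/2}$ summable, so a plain second-moment assumption would yield only convergence in probability (or in $L^1$) rather than almost sure convergence from this line of argument.
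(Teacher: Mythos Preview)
Your proof is correct and follows essentially the same route as the paper: Doob martingale decomposition, a leave-one-out bound on the martingale differences using exchangeability, Burkholder's inequality to get $\Ex|M_n|^p \le C n^{p/2}$, then Chebyshev plus Borel--Cantelli. The only cosmetic differences are that the paper expresses $D_{n,i}$ via an independent resampled copy $X_i'$ before passing to the leave-one-out term, and bounds $\|\sum D_{n,i}^2\|_{p/2}$ via the triangle inequality in $L^{p/2}$ rather than your power-mean Jensen step; both lead to the same $n^{p/2}$ estimate.
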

The proof of Theorem~\ref{thm:SLLN} is given in Appendix~A.

\subsection{Convergence of persistence diagrams---revisited}

Let $1 \le q \le N-1$. Let $\cP^\lambda$ be a homogeneous Poisson point process in $\R^N$ with density $\lambda > 0$. Denote by $\cP^\lambda_L$ the restriction of $\cP^\lambda$ on $[-\frac{L^{1/N}}2, \frac{L^{1/N}}2)^N$. Then almost surely, as $L \to \infty$,
\[
	\frac{PD_q(\cP_L^\lambda)}{L} \vto \nu_q^\lambda.
\]
Here `$\vto$' denotes the vague convergence of measures and $PD_q$ has the same meaning with its counting measure. By a scaling property of homogeneous Poisson point processes, we can deduce that $\nu_q^\lambda (A)= \lambda \nu_q^1(\lambda^{1/N}A)$, for bounded measurable set $A \subset \Delta$. Note that $\nu_q^\lambda$ has full support \cite{HST-2018}.

For binomial point processes, under a weaker assumption than the boundedness assumption here, it was shown in \cite{Goel-2018} that almost surely, as $n \to \infty$,  
\[	
	\frac{\xi_{q, n}}{n} =\frac{PD_q(n^{1/N}  \cX_n)}{n}  \vto \nu_{q,\kappa},
\]
where 
\[
	\nu_{q, \kappa} (A) = \int \nu^{\kappa(x)}_q (A) dx  = \int \nu_q^1 (\kappa(x)^{1/N} A) \kappa(x) dx.
\]
In particular, $\nu_{q, \kappa} (\Delta) = \nu_q^1(\Delta)$.

Now, as a direct consequence of Theorem~\ref{thm:number-of-point-in-PD} and Theorem~\ref{thm:critical}, it follows that
\[
	\frac{\xi_{q, n}(\Delta)}{n} = \frac{\# PD_q(\cX_n)}{n} \to M_{N,q}\quad \text{almost surely as } n\to \infty,
\]
where 
\[
	M_{N,q} = \gamma_{N,q} - \gamma_{N, q-1} + \cdots + (-1)^{q-1} \gamma_{N,1} + (-1)^q.
\]

Here is the main result in this random part.
\begin{theorem}\label{thm:intervals}
	Let $\zeta_{q,n}$ be the random measure on $[0, \infty)$ defined by
	\[
		\zeta_{q,n} = \frac{1}{n} \sum_{(b, d) \in PD_q(n^{1/N}\cX_n)} \delta_{(d - b)}.
	\]
Then almost surely, as $n\to \infty$, $\zeta_{q,n}$ converges weakly to a measure $\mu_{q,\kappa}$, meaning that for any bounded continuous function $f \colon [0, \infty) \to \R$,  almost surely as $n \to \infty$
\[
	\int f(x) d\zeta_{q,n}(x) = \frac{1}{n}\sum_{(b, d) \in PD_q(n^{1/N}\cX_n)}  f(d - b) \to \int f(x) d\mu_{q,\kappa}(x).
\]
Here 
\[
	\mu_{q,\kappa} = (M_{N,q} - \nu_{q,\kappa}(\Delta))\delta_0 + \nu_{q,\kappa} \circ pr^{-1}, \quad (pr\colon \Delta \ni (b,d) \mapsto d-b).
\]
\end{theorem}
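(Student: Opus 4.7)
The plan is to prove weak convergence by testing against every $f \in C_b([0,\infty))$. Since the total masses match in the limit---the total mass of $\zeta_{q,n}$ equals $\xi_{q,n}(\Delta)/n$, converging almost surely to $M_{N,q}$, which is the total mass of $\mu_{q,\kappa}$---the task reduces to proving almost surely that
\[
\frac{1}{n}\sum_{(b,d) \in PD_q(n^{1/N}\cX_n)} f(d-b) \to f(0)(M_{N,q} - \nu_{q,\kappa}(\Delta)) + \int_\Delta f(d-b)\, d\nu_{q,\kappa}(b,d).
\]
I would fix parameters $0 < \epsilon < L < \infty$ and partition the persistence points into three regions that exhaust $\Delta$: the compact bulk $A_\epsilon^L = \{\epsilon \le d-b,\ d \le L\}$, the tail $B_L = \{d > L\}$, and the near-diagonal piece $C_\epsilon^L = \{d-b < \epsilon,\ d \le L\}$. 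Each region is handled by a different tool.

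The bulk $A_\epsilon^L$ is relatively compact in $\Delta$ and, for all but countably many choices of $\epsilon, L$, has $\nu_{q, \kappa}$-null boundary. For such parameters, the vague convergence $\xi_{q,n}/n \to \nu_{q,\kappa}$ from \cite{Goel-2018} (applied to $f(d-b)$ against continuous bumps approximating the indicator of $A_\epsilon^L$ from above and below) yields, almost surely,
\[
\frac{1}{n}\sum_{(b,d) \in A_\epsilon^L} f(d-b) \to \int_{A_\epsilon^L} f(d-b)\, d\nu_{q,\kappa}(b,d).
\]
Since $\nu_{q,\kappa}(\Delta) \le M_{N,q} < \infty$, dominated convergence as $\epsilon \downarrow 0$ and $L \uparrow \infty$ along such parameters delivers $\int_\Delta f(d-b)\, d\nu_{q,\kappa} = \int f\, d(\nu_{q,\kappa} \circ pr^{-1})$, the non-trivial part of the target.

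For the tail $B_L$, every $(b,d) \in PD_q$ has its death time $d$ equal to the critical value $f(\beta) = R(\beta)$ of a unique critical $(q+1)$-simplex $\beta$, so $\#(PD_q \cap B_L) \le N_{q+1}^{>L}(n^{1/N}\cX_n)$, where $N_{q+1}^{>L}$ counts critical $(q+1)$-simplices of circumradius exceeding $L$. Running the proof of Theorem~\ref{thm:SLLN-critical} with the truncated indicator $h_{q+1}(\sigma)\mathbf{1}_{R(\sigma) > L}$ in place of $h_{q+1}(\sigma)$ yields $N_{q+1}^{>L}(\cX_n)/n \to \gamma_{N,q+1}^{>L}$ almost surely, where $\gamma_{N,q+1}^{>L}$ is the corresponding truncated integral. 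The factor $e^{-\omega_N R^N}$ in the integrand forces $\gamma_{N,q+1}^{>L} \downarrow 0$ as $L \to \infty$, so the tail's contribution to $\int f\,d\zeta_{q,n}$ is eventually bounded by $\|f\|_\infty \gamma_{N,q+1}^{>L} + o(1)$, which is uniformly small for large $L$.

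The near-diagonal piece is then pinned down by subtraction: almost surely,
\[
\frac{\#(PD_q \cap C_\epsilon^L)}{n} = \frac{\xi_{q,n}(\Delta)}{n} - \frac{\#(PD_q \cap A_\epsilon^L)}{n} - \frac{\#(PD_q \cap B_L)}{n}
\]
converges to $M_{N,q} - \nu_{q,\kappa}(A_\epsilon^L) - r_L$ with $r_L \to 0$ as $L \to \infty$; sending $L \to \infty$ and then $\epsilon \downarrow 0$ gives $M_{N,q} - \nu_{q,\kappa}(\Delta)$. Continuity of $f$ at $0$ allows replacing $f(d-b)$ by $f(0)$ on $C_\epsilon^L$ with uniform error $\omega_f(\epsilon)$, which together with the count produces the claimed point mass $(M_{N,q} - \nu_{q,\kappa}(\Delta))f(0)$ at the origin, completing the identification. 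The main obstacle I anticipate is the tail estimate: Theorem~\ref{thm:SLLN-critical} is stated only for the untruncated $N_{q+1}$, so one must verify that the bounded-moment hypothesis $\sup_n \Ex[|N_{q+1}^{>L}(\cX_n) - N_{q+1}^{>L}(\cX_{n-1})|^p] < \infty$ of Theorem~\ref{thm:SLLN} still holds for the truncated functional. This should follow from essentially the same combinatorial estimates used in \cite{Bobrowski-Adler-2014} for $N_{q+1}$ itself, but must be checked carefully.
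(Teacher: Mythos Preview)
Your proposal is correct and follows essentially the same route as the paper: the paper packages your three-region decomposition into a general deterministic lemma (vague convergence of $\mu_n$ plus bounded total mass plus the tail-tightness condition $\lim_k \limsup_n \mu_n(\{d \ge r_k\}) = 0$ implies the desired convergence of $\int f(d-b)\,d\mu_n$, with a possible atom at $0$ when $\mu_n(\Delta)\to M\ge\mu(\Delta)$), and then verifies the tail condition exactly as you do, via the strong law of large numbers for critical simplices with large circumradius. Your anticipated obstacle---the SLLN for the truncated count $N_{q+1}^{>L}$---is precisely the one step the paper also leaves to the argument of \cite{Bobrowski-Adler-2014}.
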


We remark that for $\alpha > 0$, the persistent sum 
\[
	\frac{1}{n} \sum_{(b, d) \in PD_q(n^{1/N}\cX_n)} (d - b)^\alpha
\]
converges almost surely to a finite limit \cite{Divol-2018}. Therefore, the result in the above theorem holds for any continuous function of polynomial growth.

Theorem~\ref{thm:intervals} follows directly from the following deterministic result. A function $F \colon \Delta \to \R$ vanishes at infinity if for every $\varepsilon > 0$, there is a compact $K\subset \Delta$ such that $|f(x)| < \varepsilon$, for $x \in \Delta \setminus K$.
\begin{lemma}
\begin{itemize}
\item[{\rm(i)}] Assume that the sequence of finite measures $\{\mu_n\}$ converges vaguely to $\mu$ as $n \to \infty$ and that 
	\[
		\sup_n \mu_n(\Delta) < \infty.
	\]
Then for any continuous function $F$ vanishing at infinity, it holds that 
	\[
		\int_\Delta F d\mu_n \to \int_\Delta F d\mu \text{ as } n \to \infty.
	\]

\item[{\rm(ii)}] Assume further that there is a sequence of increasing positive numbers $\{r_k\}$ tending to infinity such that 
	\[
		\lim_{k \to \infty}\limsup_{n \to \infty} \mu_n(\{(b,d) \in \Delta : d \ge r_k \}) = 0.
	\]
	Then for any  bounded continuous function  $f \colon [0,\infty) \to \R$ with $f(0) = 0$,
	\[
		\int_\Delta f(d-b) d\mu_n \to \int_\Delta f(d-b) d\mu \text{ as } n \to \infty.
	\]

\item[{\rm(iii)}] Consequently, for any bounded continuous function $f \colon [0,\infty) \to \R$,
	\[
		 \int_\Delta f(d-b) d\mu_n \to (M - \mu(\Delta)) f(0) + \int_\Delta f(d-b) d\mu  \text{ as } n \to \infty,	
	\]
provided that $\mu_n(\Delta) \to M$ as $n \to \infty$.
\end{itemize}
\end{lemma}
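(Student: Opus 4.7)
The plan is to dispatch the three parts in sequence. Part (i) follows from a standard density argument for vague convergence, part (ii) is the main step, relying on a truncation enabled by the tightness-like hypothesis, and part (iii) is a short reduction to (ii). As a preliminary observation, since $\Delta$ is a locally compact Hausdorff space that is $\sigma$-compact, exhausting it by compact sets $K_j \uparrow \Delta$ and dominating each $\mathbf{1}_{K_j}$ by a continuous compactly supported cutoff yields $\mu(K_j) \le \sup_n \mu_n(\Delta)$ via vague convergence, whence $\mu(\Delta) < \infty$.

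For (i), fix $\varepsilon > 0$, pick a compact $K \subset \Delta$ with $|F| < \varepsilon$ off $K$, and let $\chi \colon \Delta \to [0,1]$ be a continuous compactly supported cutoff equal to $1$ on $K$ (Urysohn in locally compact Hausdorff). Then $\chi F$ is continuous with compact support and $\|F - \chi F\|_\infty \le \varepsilon$. Vague convergence gives $\int \chi F \, d\mu_n \to \int \chi F \, d\mu$, and the total error is bounded by $\varepsilon(\sup_n \mu_n(\Delta) + \mu(\Delta))$; letting $\varepsilon \to 0$ proves (i).

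For (ii), set $F(b,d) := f(d-b)$. Because $f$ is continuous with $f(0) = 0$, $F$ vanishes on approach to the diagonal $\{b=d\}$, but generally does not vanish as $d \to \infty$ along lines of constant $d - b$, so (i) does not apply directly. Introduce a continuous cutoff $\eta_k \colon [0,\infty) \to [0,1]$ with $\eta_k = 1$ on $[0, r_k]$ and $\eta_k = 0$ on $[r_k+1, \infty)$, and set $F_k(b,d) := f(d-b)\eta_k(d)$. Using $f(0)=0$ and the support of $\eta_k$, one checks that for every $\delta > 0$ the set $\{|F_k| \ge \delta\}$ lies in a compact subregion of $\Delta$ bounded above by $d \le r_k + 1$ and bounded away from the diagonal, so $F_k$ vanishes at infinity on $\Delta$, and part (i) yields $\int F_k \, d\mu_n \to \int F_k \, d\mu$. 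Since $|F - F_k| \le \|f\|_\infty \mathbf{1}_{\{d > r_k\}}$, the triangle inequality gives
\[
\limsup_{n \to \infty} \Bigl|\int F \, d\mu_n - \int F \, d\mu\Bigr| \le \|f\|_\infty \Bigl(\limsup_{n\to\infty} \mu_n(\{d \ge r_k\}) + \mu(\{d \ge r_k\})\Bigr),
\]
and letting $k \to \infty$ kills both terms on the right, the first by the hypothesis and the second because $\mu$ is finite.

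For (iii), decompose $f = f(0) + \tilde f$ with $\tilde f(0) = 0$. Part (ii) applied to $\tilde f$ gives $\int \tilde f(d-b) \, d\mu_n \to \int \tilde f(d-b) \, d\mu$, while $\int f(0) \, d\mu_n = f(0)\mu_n(\Delta) \to f(0) M$ and $\int f(0) \, d\mu = f(0)\mu(\Delta)$; adding yields the claimed limit $f(0)(M - \mu(\Delta)) + \int f(d-b) \, d\mu$. The main obstacle is part (ii), specifically the verification that $F_k$ vanishes at infinity: one must correctly view $\Delta$ as a locally compact Hausdorff space whose compact subsets are precisely those bounded above and bounded away from the diagonal, and recognize that approaching the diagonal is part of "going to infinity" in $\Delta$ — a feature that the hypothesis $f(0) = 0$ is precisely designed to accommodate.
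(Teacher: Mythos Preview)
Your proof is correct and is precisely the ``standard approximation method'' the paper alludes to; the paper omits the proof entirely, so there is nothing further to compare. Your identification of the key point---that the diagonal $\{b=d\}$ is part of infinity in $\Delta$, so the hypothesis $f(0)=0$ is exactly what makes $F_k$ vanish at infinity---matches the paper's own remark following the lemma.
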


The lemma contains some ideas taken from \cite{Divol-2018}.
It can be proved by a standard approximation method so that the proof is omitted. 
Note that the diagonal $\{b=d\}$ plays a role as infinity in the topology of $\Delta$. When $\mu_n$ converges vaguely to $\mu$ and $\mu_n(\Delta) \to \mu(\Delta)$, then $\mu_n$ converges weakly to $\mu$. However, since we only assume that $\mu_n(\Delta) \to M$, where $M$ may not equal $\mu(\Delta)$, some mass could escape to infinity. 

To prove Theorem~\ref{thm:intervals}, it remains to show the condition (ii) in the above lemma. However, that condition is a consequence of the strong law of large numbers for critical simplices in the thermodynamic regime (cf.~\cite{Bobrowski-Adler-2014}).

\appendix
\section{The strong law of large numbers for functionals on binomial point processes}
\renewcommand{\thesection}{A}

In this section, we prove Theorem~{\rm\ref{thm:SLLN}}.
\begin{proof}[Proof of Theorem~{\rm\ref{thm:SLLN}}]
	For fixed $n$, let us estimate $\Ex[|H_n(\cX_n) - \Ex[H_n(\cX_n)]|^p]$.
 Set $Z = H_n(\cX_n)$. Define a martingale sequence $\{M_i\}_{i = 0}^n$ as 
\[
	M_i = \Ex[Z | \cF_i],	\quad  i = 0, 1, \dots, n.
\]
Here $\cF_0 = \{\emptyset, \Omega\}$ and $\cF_i = \sigma(X_j : j \le i)$ for $i =1,2,\dots, n$. Let $\langle Z \rangle$ denote the quadratic variation
\[
	\langle Z \rangle = \sum_{i = 1}^n (M_i - M_{i - 1})^2. 
\]
Then Burkholder's inequalities (last line in page 518 of \cite{Boucheron-2005}) imply that
\begin{equation}\label{Burkholder}
	\|Z - \Ex[Z]\|_p \le (p-1) \|\sqrt{\bra{Z}}\|_p.
\end{equation}

Let $\{X_i'\}_{i = 1}^n$ be an independent copy of $\{X_i\}_{i = 1}^n$. Let 
\[
	Z_i' = H_n(X_1, \dots,X_{i - 1}, X_i', X_{i+1}, \dots X_n), \quad \check Z_i = H_n(X_1, \dots,X_{i - 1}, X_{i+1}, \dots X_n).
\]
Observe that 
\[
	M_i - M_{i - 1} = \Ex[Z - Z_i' | \cF_i].
\]
Hence,
\begin{align*}
	\Ex[|M_i - M_{i - 1}|^p] \le \Ex[|Z - Z_i'|^p] &\le 2^{p -1 } ( \Ex[|Z - \check Z_i|^p] +  \Ex[|Z_i' - \check Z_i|^p]) \\
	&= 2^p \Ex[|H_n(\cX_n) - H_n(\cX_{n - 1})|^p].
\end{align*}
Here the first inequality follows from Jensen's inequality for conditional expectation.

From the definition of $\bra{Z}$, we have 
\[
	\norm{\bra{Z}}_{p/2} \le \sum_{i = 1}^n \norm{(M_i - M_{i - 1})^2}_{p/2} =  \sum_{i = 1}^n \norm{M_i - M_{i - 1}}_{p}^2. 
\]
The last sum is bounded by $n C_p$, for some constant $C_p$, by the assumption. Together with the inequality~\eqref{Burkholder}, it follows that 
\[
	\|Z - \Ex[Z]\|_p \le (p-1) \|\sqrt{\bra{Z}}\|_p =  (p-1) \norm{\bra{Z}}_{p/2}^{1/2} \le (p-1)C_p^{1/2} n^{1/2}.
\]
Therefore for some constant $D_p > 0$,
\[
	\Ex[|H_n(\cX_n) - \Ex[H_n(\cX_n)]|^p] \le D_p n^{p/2},
\]
from which 
\[
	\Prob\left( \frac{|H_n(\cX_n) - \Ex[H_n(\cX_n)]|}{n} \ge \varepsilon \right)  \le \frac{D_p}{\varepsilon^p } n^{-p/2}. 
\]
Then the desired almost sure convergence is a consequence of the Borel--Cantelli lemma. The proof is complete.
\end{proof}

%

\end{document}